\newtheorem{Theorem}{Theorem}[section]
\newtheorem{Lemma}{Lemma}[section]
\newtheorem{Proposition}{Proposition}[section]
\theoremstyle{definition}
\theoremstyle{remark}
\newtheorem{Remark}{Remark}[section]
\renewcommand{\u}{{\bf u}}
\newcommand{\R}{{\mathbb R}}
\newcommand{\Dv}{{\rm div}}
\def\f{\frac}
\renewcommand{\O}{\Omega}
\def\D{\Delta }
\def\hf1{^\f{1}{1-\xi^2}}
\def\be{\begin{equation}}
\def\en{\end{equation}}
\def\bs{\begin{split}}
\def\es{\end{split}}
\author{Cheng Yu}
\address{Department of Mathematics,  The University of Texas,
                           Austin, Texas 78712.}
\email{yucheng@math.utexas.edu}
\title%[Global weak solutions to N-S-V equations]
\keywords{Energy conservation, Navier-Stokes equations, weak solution.}
\subjclass[2000]{}
\date{\today}
\begin{document}
\begin{abstract}
In this paper, we provide a sufficient condition of the energy equality for the incompressible Navier-Stokes equations in bounded domains.
\end{abstract}

\maketitle
\section{Introduction}
This paper is concerned with the question of the energy conservation for the weak solutions of Navier-Stokes equations
\begin{equation}
\label{NS equations}
\begin{split}
& \u_t+\u\cdot\nabla\u+\nabla P-\mu\D\u=0,
\\&\Dv\u=0,
\end{split}
\end{equation}
with the initial data
\begin{equation}
\label{initial data}
\u(0,x)=\u_0
\end{equation}
for $(t,x)\in \R^{+}\times\O,$ and boundary condition \begin{equation}
\label{boundary condition}
\u=0\;\;\text{ on }\partial\Omega,
\end{equation} where $\O$ is a bounded domain in $\R^d$, $d=2$ or $3.$
\vskip0.3cm

The concept of weak solution has been introduced  in \cite{Le, H}. As we all know it, a weak solution $\u$ satisfies the energy inequality
\begin{equation}
\label{energy inequality}
\int_{\O}|\u(t,x)|^2\,dx+2\mu\int_0^t\int_{\O}|\nabla\u|^2\,dx\,dt\leq \int_{\O}|\u_0|^2\,dx,
\end{equation}
for any $t\in[0,T].$
Given that the solution to the Navier-Stokes is sufficiently smooth, the energy equality could hold for any time. However, the existence of smooth solutions is a longstanding open problem in three dimensional space. Thus, an interesting question to ask is how badly behavior of a weak solution  $\u$ can keep the energy conservation.
Mathematically, what is minimal regularity such that a weak solution satisfies the following energy equality,
\begin{equation}
\label{energy conservation}
\int_{\O}|\u(t,x)|^2\,dx+2\mu\int_0^t\int_{\O}|\nabla\u|^2\,dx\,dt=\int_{\O}|\u_0|^2\,dx?
\end{equation}
This problem has been studied by
Serrin \cite{Serrin} and Shinbrot \cite{Shinbrot}
when $\O$ is all of $\R^d$ or the periodic domain. In particular, Serrin has proved that $\u$ is smooth and satisfies \eqref{energy conservation} if $\u \in L^p(0,T;L^q(\mathbb{T}^d))$, where
\begin{equation}
\label{Serrion condition}
\frac{2}{p}+\frac{d}{q}\leq 1.
\end{equation}
 Shinbrot \cite{Shinbrot} has shown the energy equality \eqref{energy conservation} holds if $\u \in L^p(0,T;L^q(\mathbb{T}^3))$, where
\begin{equation}
\label{Shinbrot condition}
\frac{1}{p}+\frac{1}{q}\leq \frac{1}{2},\; q\geq 4.
\end{equation}
\vskip0.3cm
 On the Euler equations, it is linked to the name of the ``Onsager conjecture" \cite{O}: non-conservation of energy in the three-dimensional Euler equations would be related to the loss of regularity. Specifically, Onsager conjectured that every weak solution to the Euler equations with H\"older continuity exponent $\alpha > {1\over 3}$ conserves energy; and anomalous dissipation of energy occurs when $\alpha < {1\over 3}$. See \cite{DS13,R03} for reviews and further discussions.
The first part of the conjecture was proved by Eyink \cite{Ey}, Constantin-E-Titi \cite{CET},  among others.
The development toward the other direction of the conjecture is more recent, see  \cite{DS13b,DS14, I,I2}.
 Very recently,
 Bardos-Titi \cite{BT} considered the boundary effects and extended the classical result of  Constantin-E-Titi \cite{CET} to the case of the bounded domain.  It would be an interesting work to extend the work of \cite{BT} to the Navier-Stokes equations.

\vskip0.3cm

We are particularly interested in
 investigating the relation between the energy equality and the degree of regularity of the solutions for system \eqref{NS equations}-\eqref{boundary condition}. In particular we provide a sufficient conditions on the regularity of solutions to ensure energy equality. Our approach is in the spirit of Bardos-Titi \cite{BT}, Constantin-E-Titi \cite{CET} and Kato \cite{K}.
 The main difficulties, which also constitutes the main contribution of this paper, are explained as follows.

\subsubsection*{Less regularity of $\u$}
In contrast to the Onsager conjecture of the Euler equations, the energy equality of the Navier-Stokes equations has less regularity assumption. In fact, our goal is to look for the condition which requires the regularity below the ones of \cite{Serrin}. In \cite{BT}, the bound of $\u$ in $L^{\infty}$ could be obtained and it is crucial to control the boundary effects. However, this bound is not allowed to use in this paper because it is smooth enough for the solution of Navier-Stokes equations.  Thus,
we have to take a careful analysis for each term and boundary effects under different regularity space. However, a benefit of the diffusion term is that  $\nabla\u$ is bounded in $L^2(0,T;L^2(\O))$.
\subsubsection*{Appearance of the boundary layer}
 We have to pay attention on the boundary effects of the diffusion term for the Navier-Stokes equations, which is different from the Euler equations.
  The key step is to control the integration on the diffusion near boundary, which is similar to
   the famous open problem which was formulated by Kato \cite{K}. In particular, it is not known if "$L^2-\text{strength}$" of this layer goes to zero, and more precisely, the limit of solutions of the Navier-Stokes equations with Dirichlet boundary conditions to the solutions of Euler equations.

\vskip0.3cm
Our current work is motivated by the recent work of  Bardos-Titi \cite{BT} where they used the cut-off argument to  prove the Onsager conjecture in bounded domains. To state our result and proof,  we adopt their notation with respect to the boundary:
\\for any $x\in \overline{\partial\O},$ $d(x)=\inf_{y\in \partial\O}|x-y|$, and the open set $\O_h=\{x\in \O|d(x)<h\}.$\\
From now, we assume that $\partial\O$ is $C^2$ compact manifold, thus there exists $h_0(\O)>0$ with the following properties:
\\1. For any $x\in \overline{\O_{h_0}}$, $d(x)$ belongs to $C^1(\overline{\O_{h_{0}}})$;
\\2. For any $x\in \overline{\O_{h_0}}$, there exists a unique point $\delta(x)\in \partial \O$ such that
$$d(x)=|x-\delta(x)|\quad\quad\text{ and one has }\;\;\nabla d(x)=-\vec{n}(\delta(x)).$$
Next, we defined $$\theta_{h}(x)=\kappa(\frac{d(x,\partial\O)}{h}),$$
where
$$s\mapsto\kappa(s)\in C^{\infty}(\O),\;\kappa'(s)\geq 0,$$
and $\kappa(s)=0$ for $s\leq 1$, and $\kappa(s)=1$ for $s\geq 2$. In addition, $$|\kappa'(t)-\kappa'(s)|\leq L|t-s|,$$
 where $L$ is a Lipschitz constant. Note that if $h\leq h_0(\O)$, then $$d(x,\partial\O)=|x-\sigma(x)|\in C^2(\O).$$
The following is our main result of this paper.

\begin{Theorem}
\label{main result}
Let $\u\in L^{\infty}(0,T;L^2(\O))\cap L^2(0,T;H^1(\O))$ be a weak solution of the incompressible Navier-Stokes equations, that is,
\begin{equation}
\label{weak formulation}
\begin{split}
-\int_0^T\int_{\O}\u\varphi_t\,dx\,dt&-\int_{\O}\u_0\varphi(0,x)\,dx-\int_0^T\int_{\O}\nabla\varphi\u\otimes\u\,dx\,dt
\\&+\mu\int_0^T\int_{\O}\nabla\u\nabla\varphi\,dx\,dt=0
\end{split}
\end{equation}
for any smooth test function $\varphi\in C^{\infty}(\R^{+}\times\O)$ with compact support, and $\Dv\varphi=0.$
In addition, if
$$\u \in L^p(0,T;L^q(\O))\cap L^s(0,T;B_s^{\alpha,\infty}(\Omega)),$$
 for any $\frac{1}{p}+\frac{1}{q}\leq  \frac{1}{2}, \; q\geq 4$, $s>2$ and for any $\frac{1}{2}+\frac{1}{s}<\alpha< 1$, then
\begin{equation*}
%\label{energy conservation}
\int_{\O}|\u(t,x)|^2\,dx+2\mu\int_0^t\int_{\O}|\nabla\u|^2\,dx\,dt=\int_{\O}|\u_0|^2\,dx
\end{equation*}
for any $t\in [0,T].$
\end{Theorem}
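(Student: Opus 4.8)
The plan is to mollify the weak solution, use the regularized equation to derive an approximate energy identity, and then pass to the limit, with the boundary cutoff $\theta_h$ controlling the terms that live near $\partial\O$. First I would introduce a spatial mollification $\u^\varepsilon = \u * \eta_\varepsilon$ (interior mollifier), and test the weak formulation against $\varphi = \theta_h(x)\,\u^\varepsilon$ after a further regularization in time, exactly in the spirit of Bardos--Titi and Constantin--E--Titi. This is legitimate away from the boundary because $\theta_h$ vanishes on $\O_h$; the product $\theta_h \u^\varepsilon$ is an admissible (approximately divergence-free) test function once one corrects for $\na\theta_h\cdot\u^\varepsilon$, a commutator-type term. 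Multiplying through and integrating by parts yields, schematically,
\begin{equation*}
\f{d}{dt}\f12\int_\O \theta_h |\u^\varepsilon|^2\,dx + \mu\int_\O \theta_h|\na\u^\varepsilon|^2\,dx = \mathcal{R}_\varepsilon^{\rm conv} + \mathcal{R}_h^{\rm diff} + \mathcal{R}_h^{\rm conv,bdry},
\end{equation*}
where $\mathcal{R}_\varepsilon^{\rm conv}$ collects the convective commutator $\int (\u\cdot\na)\u^\varepsilon\cdot\u^\varepsilon - \overline{(\u\cdot\na)\u}\cdot\u^\varepsilon$ times $\theta_h$, $\mathcal{R}_h^{\rm diff}$ collects $\mu\int \na\theta_h\cdot\na(\tfrac12|\u^\varepsilon|^2)$-type boundary terms from the diffusion, and $\mathcal{R}_h^{\rm conv,bdry}$ collects $\int \na\theta_h\cdot\u^\varepsilon\, |\u^\varepsilon|^2$ and pressure boundary terms.

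Next I would estimate each remainder. For $\mathcal{R}_\varepsilon^{\rm conv}$: the Besov hypothesis $\u\in L^s(0,T;B_s^{\alpha,\infty})$ with $\alpha>\tfrac12+\tfrac1s$ gives, by the standard Constantin--E--Titi commutator estimate, $\|(\u\cdot\na)\u^\varepsilon - \overline{(\u\cdot\na)\u}\|$ controlled by $\varepsilon^{\beta}\|\u\|_{B_s^{\alpha,\infty}}^2$-type quantities that are integrable in time for $s>2$, so this term vanishes as $\varepsilon\to0$ for fixed $h$. Actually I expect one must be slightly careful: since $\na\u\in L^2L^2$ one can also use $\mu\int\theta_h|\na\u^\varepsilon|^2 \to \mu\int\theta_h|\na\u|^2$ directly, so the diffusion commutator issue in the interior is mild. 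The genuinely new work is $\mathcal{R}_h^{\rm diff}$: since $|\na\theta_h|\lesssim 1/h$ is supported on the shell $\O_{2h}\setminus\O_h$ of width $\sim h$, one writes $\mu\int_{\O_{2h}} \f1h |\u^\varepsilon||\na\u^\varepsilon|\,dx$ and bounds it using Hölder in the thin shell together with the $L^2 H^1$ bound on $\u$ and the trace/Hardy-type inequality $\|\u/d\|_{L^2(\O)}\lesssim\|\na\u\|_{L^2(\O)}$ (valid since $\u=0$ on $\partial\O$). This is precisely the Kato-type boundary-layer quantity mentioned in the introduction; the Besov regularity $\u\in B_s^{\alpha,\infty}$ with $\alpha<1$ is what lets one show $\int_0^t \mathcal{R}_h^{\rm diff}\,dt \to 0$ as $h\to0$, and I would carry this out by estimating $\|\u\|_{L^\infty(\O_{2h})}$ or $\|\u\|_{L^2(\O_{2h})}$ by $h^{\alpha-\text{something}}$ via the Besov norm, getting a net positive power of $h$ after integrating in time with exponent $s$. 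The convective boundary term $\mathcal{R}_h^{\rm conv,bdry}$ is handled by Shinbrot's exponents: $\u\in L^pL^q$ with $1/p+1/q\le1/2$, $q\ge4$ makes $|\u|^3\in L^{p/3}L^{q/3}$ with the right integrability, and the thin-shell factor supplies the decay in $h$; the pressure boundary term is estimated using $P\in L^{p/2}L^{q/2}$ (from the pressure equation $-\D P = \na\cdot(\u\cdot\na\u)$ and Calderón--Zygmund) against $\na\theta_h$ on the shell.

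Finally I would take $\varepsilon\to0$ first (fixed $h$) to obtain the identity
\begin{equation*}
\f12\int_\O\theta_h|\u(t)|^2\,dx + \mu\int_0^t\int_\O\theta_h|\na\u|^2\,dx\,dt = \f12\int_\O\theta_h|\u_0|^2\,dx + \int_0^t\big(\mathcal{R}_h^{\rm diff}+\mathcal{R}_h^{\rm conv,bdry}\big)\,dt,
\end{equation*}
then take $h\to0$, using $\theta_h\to1$ pointwise with dominated convergence on the left side and the decay estimates on the right, to recover \eqref{energy conservation}. One technical point at the very end: one must also check that the time-mollification error and the initial-time term behave well, i.e. that $\u$ is strongly $L^2$-continuous in time (or at least weakly continuous with $\|\u(t)\|_{L^2}$ upper semicontinuous), so that evaluating the identity at a general $t$ is justified; this follows from the Besov-in-time control combined with the equation. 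The main obstacle, as flagged in the introduction, is $\mathcal{R}_h^{\rm diff}$: closing the estimate requires extracting a strictly positive power of $h$ from the interplay of the shell width, the Hardy inequality, and the fractional Besov norm, and it is exactly here that the lower bound $\alpha>\tfrac12+\tfrac1s$ and the restriction $s>2$ are forced. I expect the bookkeeping of exponents — matching $(p,q)$ from Shinbrot's range against $(s,\alpha)$ so that every remainder has a common integrable time majorant — to be the most delicate part of the write-up.
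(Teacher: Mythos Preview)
Your overall strategy---cut off near the boundary with $\theta_h$, mollify, derive an approximate identity, and pass to the limit---matches the paper's architecture, but the implementation you sketch diverges from the paper in three linked respects, and your account of \emph{where} the Besov condition enters does not match what the paper actually does.

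First, the paper does not use $\theta_h\u^\varepsilon$ as test function; it uses the symmetric choice $\Phi^{h,\varepsilon}=\theta_h\big((\theta_h\u)^\varepsilon\big)^\varepsilon$, so that the time term becomes exactly $\tfrac12\frac{d}{dt}\int_\O|(\theta_h\u)^\varepsilon|^2\,dx$ without any further manipulation. Second---and this is the main structural difference---the paper does \emph{not} take the sequential limit $\varepsilon\to0$ then $h\to0$. It couples the two scales by setting $h=\varepsilon^{\nu}$ and sends $(\varepsilon,h)\to0$ together. This is forced by the way the Besov hypothesis is used: on the boundary layer terms (both convective and diffusive) the paper writes $\u^\varepsilon(x)-\u^\varepsilon(\delta(x))$, bounds it by $h\|\nabla\u^\varepsilon\|_{L^s}$, and then invokes
\[
\|\nabla\u^\varepsilon\|_{L^s}\;\lesssim\;\varepsilon^{\alpha-1}\|\u\|_{B_s^{\alpha,\infty}},
\]
producing a net factor $h^{(s-2)/(2s)}\,\varepsilon^{\alpha-1}$. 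This blows up as $\varepsilon\to0$ for fixed $h$ (since $\alpha<1$), so the sequential limit you propose is incompatible with the paper's estimates. The range $\tfrac{2s(1-\alpha)}{s-2}<\nu<1$ is what makes the coupled factor vanish, and its nonemptiness is \emph{exactly} the constraint $\alpha>\tfrac12+\tfrac1s$. Third, the paper never invokes the Hardy inequality $\|\u/d\|_{L^2}\lesssim\|\nabla\u\|_{L^2}$; all boundary-layer smallness comes from the Besov/mollification balance just described, not from boundary decay of $\u$ itself.

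Your alternative route---pass $\varepsilon\to0$ first and then control the remaining $h$-boundary terms via Hardy plus dominated convergence on shrinking shells---is a plausible and arguably cleaner strategy, but it is genuinely different from the paper's, and your identification of $\mathcal{R}_h^{\rm diff}$ as the place where the Besov exponent $\alpha>\tfrac12+\tfrac1s$ is ``forced'' does not reflect the paper's mechanism. If you pursue your approach you should check carefully that (i) after $\varepsilon\to0$ the convective boundary flux $\int\nabla\theta_h\cdot\u\,|\u|^2$ and the pressure flux $\int\nabla\theta_h\cdot\u\,P$ really vanish with only $L^4L^4$ and Hardy (they should, by dominated convergence, without rate), and (ii) whether you then need the Besov hypothesis at all---your write-up should make explicit which estimate actually consumes it, since in the paper it is the coupled $\varepsilon$--$h$ balance, not a post-limit shell estimate.
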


\begin{Remark} Compared to the work of \cite{Shinbrot} in a periodic domain, we need additional condition on $\u$
$$\u\in L^s(0,T;B_s^{\alpha,\infty}(\O)).$$ This allows us to treat with the boundary effects with respect to the diffusion term.
\end{Remark}
\begin{Remark} It is interesting to extend our previous results \cite{CY,Yu} on the compressible flows in the setting of the bounded domains.
\end{Remark}
\begin{Remark} Since $\u\in L^{\infty}(0,T;L^2(\O))\cap L^p(0,T;L^q(\O))$ for any $\frac{1}{p}+\frac{1}{q}\leq \frac{1}{2}, \; q\geq 4$, one can deduce that
$$\|\u\|_{L^4(0,T;L^4(\O))}\leq C\|\u\|_{L^{\infty}(0,T;L^2(\O))}^{a}\|\u\|^{1-a}_{L^p(0,T;L^q(\O))}$$ for
some $0<a<1$. Thus, we can use a fact that $\u$ is bounded in $L^4(0,T;L^4(\O))$ in our proof. On the other hand, this estimate yields the uniqueness of weak solutions in the Leray sense.
\end{Remark}

\vskip0.3cm
\section{Proof}

The goal of this section is to prove our main result. Our proof relies on the following lemma, see \cite{L}.
 \begin{Lemma}
 \label{Lions's lemma}
 Let $f\in W^{1,p}(\R^d),\,g\in L^{q}(\R^d)$ with $1\leq p,q\leq \infty$, and $\frac{1}{p}+\frac{1}{q}\leq 1$. Then, we have
 $$\|\Dv(fg)*\eta_{\varepsilon}-\Dv(f(g*\eta_{\varepsilon}))\|_{L^{r}(\R^d)}\leq C\|f\|_{W^{1,p}(\R^d)}\|g\|_{L^{q}(\R^d)}$$
 for some $C\geq 0$ independent of $\varepsilon$, $f$ and $g$, $r$ is determined by $\frac{1}{r}=\frac{1}{p}+\frac{1}{q}.$ In addition,
  $$\Dv(fg)*\eta_{\varepsilon}-\Dv(f(g*\eta_{\varepsilon}))\to0\;\;\text{ in }\,L^{r}(\R^d)$$
 as $\varepsilon \to 0$ if $r<\infty.$ Here  $\varepsilon>0$ is a small enough number, $\eta\in C_0^{\infty}(\O)$ be a standard mollifier supported in $B(0,1).$
 \end{Lemma}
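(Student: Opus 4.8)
The plan is to prove this classical DiPerna--Lions commutator estimate by deriving an explicit kernel representation for the commutator and then splitting it into an ``easy'' piece that converges by the standard properties of mollifiers and a ``hard'' piece that carries the entire derivative loss. Throughout, $f$ is scalar, $g=(g_1,\dots,g_d)$ is a vector field, and I write $h^\varepsilon:=h*\eta_\varepsilon$. Since convolution with the smooth kernel $\eta_\varepsilon$ commutes with distributional derivatives, $\Dv(fg)*\eta_\varepsilon=\Dv\big((fg)^\varepsilon\big)$, so the commutator equals $\Dv\big[(fg)^\varepsilon-f\,g^\varepsilon\big]$. First I would write out
$$(fg)^\varepsilon(x)-f(x)g^\varepsilon(x)=\int_{\R^d}\big(f(y)-f(x)\big)g(y)\,\eta_\varepsilon(x-y)\,dy,$$
apply $\Dv_x$ under the integral (the derivative falls on both $-f(x)$ and $\eta_\varepsilon(x-y)$), and arrive at
$$R_\varepsilon(x)=-\big(\nabla f\cd g^\varepsilon\big)(x)+\int_{\R^d}\big(f(y)-f(x)\big)\,g(y)\cd\nabla_x\eta_\varepsilon(x-y)\,dy.$$

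For the uniform bound, the first term is immediate: by H\"older with $\f1r=\f1p+\f1q$ and Young's convolution inequality, $\|\nabla f\cd g^\varepsilon\|_{L^r}\le\|\nabla f\|_{L^p}\|g^\varepsilon\|_{L^q}\le\|\nabla f\|_{L^p}\|g\|_{L^q}$. For the second term I would substitute $y=x-\varepsilon z$ and use $\nabla_x\eta_\varepsilon(x-y)=\varepsilon^{-d-1}(\nabla\eta)\big((x-y)/\varepsilon\big)$ to rewrite it as
$$\int_{|z|\le1}\f{f(x-\varepsilon z)-f(x)}{\varepsilon}\,g(x-\varepsilon z)\cd\nabla\eta(z)\,dz.$$
The dangerous factor $\varepsilon^{-1}$ is absorbed by the difference-quotient estimate $\|f(\cd-\varepsilon z)-f\|_{L^p}\le\varepsilon|z|\,\|\nabla f\|_{L^p}$; combining this with Minkowski's integral inequality (to bring the $L^r$ norm inside the $z$-integral), H\"older in $x$, and translation invariance of the $L^q$ norm of $g$ yields a bound $C\|\nabla f\|_{L^p}\|g\|_{L^q}$ with $C$ depending only on $\int|z|\,|\nabla\eta(z)|\,dz$. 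This gives the stated $\varepsilon$-uniform estimate.

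For the convergence $R_\varepsilon\to0$ in $L^r$ when $r<\infty$, I would first treat smooth, compactly supported $f,g$, where a Taylor expansion gives $\f{f(x-\varepsilon z)-f(x)}{\varepsilon}\to-\nabla f(x)\cd z$ and $g(x-\varepsilon z)\to g(x)$, so the hard term converges pointwise to
$$-\int\big(\nabla f(x)\cd z\big)\,\big(g(x)\cd\nabla\eta(z)\big)\,dz=\big(\nabla f\cd g\big)(x),$$
using the normalization $\int z_i\,\partial_k\eta\,dz=-\dl_{ik}$; since the first term tends to $-(\nabla f\cd g)(x)$, the two cancel and the limit is $0$, and dominated convergence upgrades this to $L^r$. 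The general case then follows by density: approximate $f$ in $W^{1,p}$ and $g$ in $L^q$ by smooth functions and control the difference of commutators by the bilinear uniform bound of the previous step. The main obstacle is exactly that hard term: one must extract genuine $\varepsilon$-uniformity from an expression carrying an explicit $\varepsilon^{-1}$, with the only available gain being a single derivative of $f$ in $L^p$, and the Minkowski/H\"older bookkeeping must stay compatible with the endpoint restriction $\f1p+\f1q\le1$ (so that $r\ge1$). The passage to the limit for merely $W^{1,p}\times L^q$ data rests essentially on this stability estimate rather than on any pointwise control.
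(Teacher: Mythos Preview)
Your argument is correct and is precisely the classical DiPerna--Lions commutator proof: the kernel representation, the change of variables $y=x-\varepsilon z$ absorbing the $\varepsilon^{-1}$ via the $L^p$ difference-quotient bound, the identification of the pointwise limit through $\int z_i\,\partial_k\eta=-\delta_{ik}$, and the passage from smooth data to general data by the bilinear stability estimate are all handled accurately.

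There is nothing to compare against here: the paper does not prove this lemma at all. It is stated with the attribution ``see \cite{L}'' (Lions's monograph) and used as a black box in the subsequent estimates. Your write-up is essentially the proof one finds in that reference (or in DiPerna--Lions), so it is fully consistent with what the paper invokes.
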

\vskip0.3cm

Here we will rely on the following lemma, which was proved in \cite{CY}.
 \begin{Lemma}
 \label{key lemma}
 Let $f \in B_p^{\alpha,\infty}( \O)$, $g\in L^q (\O)$ with $1\leq p,\,q \leq \infty.$  Then, we have
$$\|(fg)^{\varepsilon}-f\,g^{\varepsilon}\|_{L^r(\O)}\leq C\| f\|_{B_p^{\alpha,\infty}( \O)}\|g\|_{L^q(\O)}$$
for some constant $C>0$ independent of  $f$ and $g$, and with $\frac{1}{r}=\frac{1}{p}+\frac{1}{q}$. In addition,
$$\|(fg)^{\varepsilon}-f\,g^{\varepsilon}\|_{ L^r(\O)}\leq C\varepsilon^{\alpha}\to 0$$
as $\varepsilon\to 0$ if $r<\infty.$
 \end{Lemma}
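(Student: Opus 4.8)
The plan is to work with the mollified momentum equation and to localize away from $\partial\O$ with the cut-off $\theta_h$. Since the weak formulation \eqref{weak formulation} is stated only for divergence-free test fields, I would first invoke de Rham's theorem to produce a pressure $P$ with $\del_t\u+\Dv(\u\otimes\u)-\mu\D\u=-\nabla P$ in $\mathcal{D}'(\R^{+}\times\O)$, where $P\in L^2$ is controlled through the elliptic relation $-\D P=\Dv\Dv(\u\otimes\u)$ together with $\u\in L^4(0,T;L^4(\O))$ (the latter obtained from $L^\infty L^2\cap L^pL^q$ exactly as observed in the remarks). Mollifying in space ($\u^\varepsilon=\u*\eta_\varepsilon$) yields the pointwise interior identity $\del_t\u^\varepsilon+\Dv(\u\otimes\u)^\varepsilon+\nabla P^\varepsilon-\mu\D\u^\varepsilon=0$ on $\{d(x)>\varepsilon\}$. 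Because $\theta_h$ vanishes on the strip $\O_h=\{d(x)<h\}$, for $\varepsilon<h$ the field $\u^\varepsilon\theta_h$ is supported in the region where this identity is valid, so I may take the inner product with $\u^\varepsilon\theta_h$, integrate over $\O\times[0,t]$, and pass to the limit first in $\varepsilon\to0$ (with $h$ fixed) and then in $h\to0$.

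The bulk terms are handled as follows. The time term gives $\tfrac12\int_\O\theta_h|\u^\varepsilon(t)|^2-\tfrac12\int_\O\theta_h|\u_0^\varepsilon|^2$, which converges to $\tfrac12\int_\O|\u(t)|^2-\tfrac12\int_\O|\u_0|^2$ by dominated convergence as $\theta_h\uparrow1$. Integration by parts of the diffusion term produces the interior dissipation $\mu\int_0^t\int_\O|\nabla\u^\varepsilon|^2\theta_h$, which tends to $\mu\int_0^t\int_\O|\nabla\u|^2$ since $\nabla\u^\varepsilon\to\nabla\u$ strongly in $L^2$ on the compactly supported set $\{\theta_h>0\}$, plus a boundary contribution treated below. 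Integrating the convective term by parts and writing $\tau^\varepsilon=(\u\otimes\u)^\varepsilon-\u^\varepsilon\otimes\u^\varepsilon$, one isolates the commutator $\int_0^t\int_\O\tau^\varepsilon:\nabla\u^\varepsilon\,\theta_h$, which vanishes as $\varepsilon\to0$ by Lemma \ref{key lemma} paired against the boundedness of $\nabla\u$ in $L^2$ (the integrability bookkeeping using the Shinbrot condition and $\u\in L^4L^4$); the remaining pieces, after using $\Dv\u^\varepsilon=0$, become cubic terms supported on the strip and are deferred. Collecting the surviving limits and multiplying by $2$ reproduces $\int_\O|\u(t)|^2+2\mu\int_0^t\int_\O|\nabla\u|^2=\int_\O|\u_0|^2$, provided every term created by $\nabla\theta_h$ vanishes.

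Differentiating $\theta_h$ concentrates all remaining terms on the annular strip $\O_{2h}\setminus\O_h=\{h\le d(x)\le2h\}$, on which $|\nabla\theta_h|\lesssim h^{-1}$. These are the pressure term $-\int_0^t\int_\O P\,\u\cdot\nabla\theta_h$, the cubic convective terms of the form $\int_0^t\int_\O\tfrac{|\u|^2}{2}\,\u\cdot\nabla\theta_h$ and $\int_0^t\int_\O(\u\otimes\u):(\u\otimes\nabla\theta_h)$, and the diffusive boundary layer $\tfrac{\mu}{2}\int_0^t\int_\O\nabla|\u|^2\cdot\nabla\theta_h$. The first two families are dominated by $h^{-1}\int_0^t\int_{\O_{2h}\setminus\O_h}(|P|+|\u|^2)|\u|$, and I would control them by pairing the small factor $\|\u\|_{L^2(\O_{2h}\setminus\O_h)}$ (estimated below) against the bounded quantities $\|\u\|_{L^4}^2$ and $\|P\|_{L^2}$. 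The diffusive layer, by Cauchy--Schwarz, is bounded by $h^{-1}\big(\int_0^t\!\int_{\O_{2h}\setminus\O_h}|\u|^2\big)^{1/2}\big(\int_0^t\!\int_{\O_{2h}\setminus\O_h}|\nabla\u|^2\big)^{1/2}$, whose last factor already tends to $0$ by absolute continuity of the integral.

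The main obstacle is precisely this diffusive layer, the Navier--Stokes analogue of the boundary-layer quantity of Kato, and everything reduces to a near-boundary decay estimate for $\u$. Since $\u=0$ on $\partial\O$ and $\u\in B_s^{\alpha,\infty}(\O)$, comparing $\u(x)$ with its vanishing value at the nearest boundary point $\delta(x)$ and invoking the Besov modulus of continuity in $L^s$ (equivalently, a fractional Hardy inequality) should give $\|\u\|_{L^s(\O_{2h})}\lesssim h^{\alpha}\|\u\|_{B_s^{\alpha,\infty}}$. Then Hölder on the strip, whose width is $\sim h$ and which requires $s>2$, yields $\int_{\O_{2h}\setminus\O_h}|\u|^2\lesssim h^{1+2\alpha-2/s}\|\u\|_{B_s^{\alpha,\infty}}^2$; integrating in time with another Hölder step (again using $s>2$, and $\u\in L^s(0,T;B_s^{\alpha,\infty})$) and inserting this into the bounds above makes every strip term $\lesssim h^{(2\alpha-1-2/s)/2}\to0$, the exponent being positive exactly when $\alpha>\tfrac12+\tfrac1s$. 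This is where the hypotheses $s>2$ and $\tfrac12+\tfrac1s<\alpha<1$ enter in full, and establishing the near-boundary decay rigorously for Besov functions with vanishing trace is the step I expect to demand the most care.
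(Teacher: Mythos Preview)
Your proposal does not address the statement you were asked to prove. Lemma~\ref{key lemma} is a commutator estimate for mollifiers: given $f\in B_p^{\alpha,\infty}(\O)$ and $g\in L^q(\O)$, one must show $\|(fg)^\varepsilon-f\,g^\varepsilon\|_{L^r}\le C\varepsilon^\alpha$ with $\tfrac1r=\tfrac1p+\tfrac1q$. What you have written is instead a sketch of the proof of Theorem~\ref{main result}, the energy equality itself, in which Lemma~\ref{key lemma} is merely one of several tools invoked. These are different tasks, and nothing in your text bears on the lemma's assertion.

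For the record, the paper does not supply its own proof of Lemma~\ref{key lemma}; it is quoted from \cite{CY}. A direct argument would start from the identity
\[
(fg)^\varepsilon(x)-f(x)\,g^\varepsilon(x)=\int\eta_\varepsilon(x-y)\bigl(f(y)-f(x)\bigr)g(y)\,dy,
\]
apply Minkowski's integral inequality in $L^r_x$, and then use H\"older in $y$ together with the Besov increment bound $\|f(\cdot-z)-f(\cdot)\|_{L^p}\le |z|^\alpha\|f\|_{B_p^{\alpha,\infty}}$ for $|z|\le\varepsilon$, which immediately produces the factor $\varepsilon^\alpha$. Your discussion of de Rham's theorem, the Kato-type diffusive strip, the pressure, and the near-boundary decay $\|\u\|_{L^s(\O_{2h})}\lesssim h^\alpha$ belongs to the proof of the main theorem and is irrelevant to establishing the commutator bound asked for here.
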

\begin{Remark} \label{remark for proof}
If $f$ is a Lipschitz function $|f(t,x)-f(t,y)|\leq L|x-y|,$ which means $\alpha=1$, conclusion holds.
\end{Remark}
We  introduce our test function $\Phi^{h,\varepsilon}=\theta_h((\theta_h\u)^{\varepsilon})^{\varepsilon}$, where $f^{\varepsilon}=f*\eta_{\varepsilon}(x)$, $\varepsilon>0$ is a small enough number, $\eta\in C_0^{\infty}(\O)$ be a standard mollifier supported in $B(0,1).$
Using $\Phi^{h,\varepsilon}$ to test Navier-Stokes equations \eqref{NS equations}, one obtains
\begin{equation}
\label{first step equality}
\int_{\O}\Phi^{h,\varepsilon}\left(\u_t+\Dv(\u\otimes\u)+\nabla P-\mu\Delta\u\right)\,dx=0.
\end{equation}
To prove our main result, we investigate \eqref{first step equality} term by term in the following steps:
\vskip0.3cm
\textbf{Step 1}. We note that \begin{equation*}
\begin{split}&\int_{\O}\Phi^{h,\varepsilon}\cdot\u_t\,dx=\frac{1}{2}\frac{d}{dt}\int_{\O}|(\theta_h\u)^{\varepsilon}|^2\,dx,
\end{split}
\end{equation*}
which yields
 \begin{equation}
\label{ut term}
\begin{split}&\lim_{(\varepsilon,h)\to0}\int_{0}^{t}\int_{\O}\Phi^{h,\varepsilon}\cdot\u_t\,dx=\frac{1}{2}\int_{\O}|\u(t)|^2\,dx-\frac{1}{2}\int_{\O}|\u_0|^2\,dx.
\end{split}
\end{equation}

\vskip0.3cm
\textbf{Step 2}. In this step, we have the following proposition on the convection term:
\begin{Proposition}
\label{prop. on convection}
Let $\u$ be as in Theorem \ref{main result}, then
\begin{equation*}
\begin{split}&
\int_0^t\int_{\O}\Dv(\u\otimes\u)\theta_h((\theta_h\u)^{\varepsilon})^{\varepsilon}\,dx\,dt\to 0
\end{split}
\end{equation*}
as $(\varepsilon,h)\to 0$.
\end{Proposition}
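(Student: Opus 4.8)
The plan is to adapt the Bardos-Titi cutoff argument, the key point being that $\theta_h$ vanishes identically on $\O_h$: as soon as $2\varepsilon<h$ the test function $\Phi^{h,\varepsilon}=\theta_h((\theta_h\u)^\varepsilon)^\varepsilon$ is smooth and supported in $\O\setminus\O_h$, hence admissible, and \emph{no boundary integral appears} upon integration by parts in $x$. Writing $\Dv(\u\otimes\u)=(\u\cdot\nabla)\u$ (since $\Dv\u=0$) and integrating by parts, I would split
\[
\int_\O\Dv(\u\otimes\u)\,\Phi^{h,\varepsilon}\,dx
=-\int_\O(\u\cdot\nabla\theta_h)\,\big(\u\cdot((\theta_h\u)^\varepsilon)^\varepsilon\big)\,dx
-\int_\O\theta_h\,(\u\otimes\u):\nabla\big((\theta_h\u)^\varepsilon\big)^\varepsilon\,dx=:I_1+I_2,
\]
where $I_1$ is a boundary-layer term, whose integrand is supported in $\O_{2h}\setminus\O_h$, and $I_2$ is an interior term.

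First I would fix $h$ and let $\varepsilon\to0$. For fixed $h$ one has $\theta_h\u\in H^1(\O)$, hence $\nabla((\theta_h\u)^\varepsilon)^\varepsilon=((\nabla(\theta_h\u))^\varepsilon)^\varepsilon\to\nabla(\theta_h\u)$ in $L^2(\O)$, while $((\theta_h\u)^\varepsilon)^\varepsilon\to\theta_h\u$ in $L^q(\O)$ and $\u\otimes\u\in L^{q/2}(\O)$ because $q\ge4$; using also that $\u\in L^4(0,T;L^4(\O))$ under the hypotheses of the theorem to dominate the time integrands, dominated convergence gives
\[
\lim_{\varepsilon\to0}\int_0^t\!\!\int_\O\Dv(\u\otimes\u)\,\Phi^{h,\varepsilon}\,dx\,dt'
=-\int_0^t\!\!\int_\O\theta_h(\u\cdot\nabla\theta_h)|\u|^2\,dx\,dt'
-\int_0^t\!\!\int_\O\theta_h\,(\u\otimes\u):\nabla(\theta_h\u)\,dx\,dt'.
\]
The second integral on the right vanishes: expanding $\nabla(\theta_h\u)$ and using that $\int_\O\Dv(\theta_h^2|\u|^2\u)\,dx=0$ together with $\Dv\u=0$ shows $\int_\O\theta_h(\u\otimes\u):\nabla(\theta_h\u)\,dx=0$. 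Hence the $\varepsilon\to0$ limit of the convection term is exactly the boundary-layer integral $-\int_0^t\int_\O\theta_h(\u\cdot\nabla\theta_h)|\u|^2\,dx\,dt'$. (The commutator estimates recalled above could also be used for this passage, but since $\theta_h\u\in H^1$ for fixed $h$ the direct mollification argument suffices.)

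It then remains to send $h\to0$ in this boundary-layer integral, which is the main step. Since $\theta_h\le1$, $|\nabla\theta_h|\le C/h$, and $\nabla\theta_h$ is supported in $\O_{2h}$, the integral is dominated by $\frac{C}{h}\int_0^t\int_{\O_{2h}}|\u|^3\,dx\,dt'$, and I would estimate this quantity by Hardy's inequality. By the no-slip condition, $\u(t')\in H^1_0(\O)$ for a.e. $t'$, so $\int_{\O_{2h}}\frac{|\u|^2}{d(x)^2}\,dx\le C_H\|\nabla\u(t')\|_{L^2(\O)}^2$; writing $|\u|^3=\frac{|\u|}{d}\cdot d\,|\u|^2$, using $d\le2h$ on $\O_{2h}$ and Cauchy-Schwarz, and then H\"older in $t'$ with $\nabla\u\in L^2(0,t;L^2(\O))$ and $\u\in L^4(0,t;L^4(\O))$, I obtain
\[
\frac1h\int_0^t\!\!\int_{\O_{2h}}|\u|^3\,dx\,dt'\le 2C_H^{1/2}\,\|\nabla\u\|_{L^2(0,t;L^2(\O))}\,\|\u\|_{L^4((0,t)\times\O_{2h})}^2\longrightarrow0\qquad(h\to0),
\]
by dominated convergence, since $(0,t)\times\O_{2h}$ shrinks to a null set while $\u\in L^4((0,t)\times\O)$; this finishes the argument.

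The hardest step is this last one, the boundary-layer estimate $\frac1h\int_0^t\int_{\O_{2h}}|\u|^3\to0$, which is the Navier-Stokes counterpart of the boundary-layer control in Bardos-Titi. It relies on the no-slip condition (through Hardy's inequality) and on the extra integrability $\u\in L^4$ coming from Shinbrot's exponents and the energy bound; the Besov hypothesis $\u\in L^s(0,T;B_s^{\alpha,\infty}(\O))$ plays no role for the convection term and is needed only for the boundary effects of the diffusion term.
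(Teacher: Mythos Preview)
Your argument is correct, but it follows a genuinely different route from the paper's. The paper keeps $\varepsilon$ and $h$ coupled via $h=\varepsilon^{\nu}$ with $\frac{2s(1-\alpha)}{s-2}<\nu<1$ and, after splitting the boundary-layer piece $A_1$ into three commutator terms, controls the crucial one ($A_{13\varepsilon}$) by subtracting $\u^\varepsilon(\delta(x))$ and invoking the Besov hypothesis $\u\in L^s(0,T;B_s^{\alpha,\infty}(\O))$ to get a factor $h^{(s-2)/2s}\varepsilon^{\alpha-1}$. Your approach instead decouples the limits --- first $\varepsilon\to0$ for fixed $h$ (which is painless since $\theta_h\u\in H^1$), then $h\to0$ --- and handles the resulting boundary-layer integral $\frac{1}{h}\int_0^t\int_{\O_{2h}}|\u|^3$ via Hardy's inequality, using only $\u\in L^2_tH^1_0$ and $\u\in L^4_{t,x}$. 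This is more elementary and, as you note, shows that the Besov assumption is not needed for the convection term at all; the paper's coupling $h=\varepsilon^\nu$ and the Besov regularity enter only because the author treats the diffusion term along the same path. The one point to keep in mind is consistency: the final assembly (Step~5) requires the \emph{same} limiting procedure in all four pieces, so if you adopt the iterated limit here you must check that Propositions~2.2 and~2.3 also go through with $\varepsilon\to0$ first and $h\to0$ second (they do, but the diffusion boundary-layer term will still force the Besov condition, via Hardy alone one only gets $\frac{1}{h}\int_{\O_{2h}}|\nabla\u||\u|$, which is not small without extra regularity).
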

\begin{proof}
 consider the convection term
\begin{equation*}
\begin{split}&
\int_{\O}\Dv(\u\otimes\u)\theta_h((\theta_h\u)^{\varepsilon})^{\varepsilon}\,dx
\\&=-\int_{\O}\u\otimes\u\nabla\theta_h(x)((\theta_h\u)^{\varepsilon})^{\varepsilon}\,dx
-\int_{\O}\u\otimes\u\theta_h(x)\nabla((\theta_h\u)^{\varepsilon})^{\varepsilon}\,dx
\\&=A_1+A_2.
\end{split}
\end{equation*}
We see that
\begin{equation*}
\begin{split}
A_1&=-\int_{\O}\u\otimes\u\nabla\theta_h(x)((\theta_h\u)^{\varepsilon})^{\varepsilon}\,dx
\\&=\int_{\O}(\u\otimes\u\nabla\theta_h(x))^{\varepsilon}(\theta_h\u)^{\varepsilon}\,dx
\\&=\int_{\O}\left((\u\otimes\u\nabla\theta_h(x))^{\varepsilon}-(\u\otimes\u)^{\varepsilon}\nabla\theta_h\right)(\theta_h\u)^{\varepsilon}\,dx
\\&+\int_{\O}\left((\u\otimes\u)^{\varepsilon}\nabla\theta_h(x)-\u\otimes\u^{\varepsilon}\nabla\theta_h\right)(\theta_h\u)^{\varepsilon}\,dx
+\int_{\O}\u\otimes\u^{\varepsilon}\nabla\theta_h(\theta_h\u)^{\varepsilon}\,dx
\\&=A_{11\varepsilon}+A_{12\varepsilon}+A_{13\varepsilon}.
\end{split}
\end{equation*}
Note that $\u$ is bounded in $L^4(0,T;L^4(\O))$ and the property of $\theta_h$, one obtains that
\begin{equation*}
\begin{split}
|\int_0^tA_{11\varepsilon}\,dt|&\leq \int_0^T\int_{\O}\left|(\u\otimes\u\nabla\theta_h(x))^{\varepsilon}-(\u\otimes\u)^{\varepsilon}\nabla\theta_h\right||(\theta_h\u)^{\varepsilon}|\,dx\,dt
\\&\leq C\|\u\|^3_{L^4(0,T;L^4(\O))}\frac{\varepsilon}{h}\to 0,
\end{split}
\end{equation*}
where we used Lemma \ref{key lemma} and Remark \ref{remark for proof}, $h=\varepsilon^{\nu}$ for some $0<\nu<1$.
 Note that $\nabla\u$ is bounded in $L^2(0,T;L^2(\O))$, thus we have
  \begin{equation*}
\begin{split}
|\int_0^TA_{12\varepsilon}\,dt|&\leq \int_0^T\int_{\O}\left|(\u\otimes\u)^{\varepsilon}\nabla\theta_h(x)-\u\otimes\u^{\varepsilon}\nabla\theta_h\right||(\theta_h\u)^{\varepsilon}|\,dx\,dt
\\&\leq C\frac{\varepsilon}{h}\|\u\|^2_{L^4(0,T;L^4(\O))}\|\nabla\u\|_{L^2(0,T;L^2(\O))}\to 0
\end{split}
\end{equation*}
as $\varepsilon\to 0.$
Note that
\begin{equation*}
\begin{split}
A_{13\varepsilon}&=
\int_{\O}\u\otimes\u^{\varepsilon}\nabla\theta_h(\theta_h\u)^{\varepsilon}\,dx
\\&=
\int_{\O}\u\otimes(\u^{\varepsilon}(x)-\u^\varepsilon(\delta(x)))\nabla\theta_h(\theta_h\u)^{\varepsilon}\,dx,
\end{split}
\end{equation*}
thus
\begin{equation}
\begin{split}
\label{A13 control}
|\int_0^TA_{13\varepsilon}\,dt|&\leq\frac{C}{h}\||\u|^2\|_{L^4(0,T;L^4(\O))}\|\u^{\varepsilon}(x)-\u^{\varepsilon}(\delta(x))\|_{L^s(0,T;L^s(\O))}h^{\frac{s-2}{2s}}
\\&\leq C\||\u|^2\|_{L^4(0,T;L^4(\O))}\|\nabla\u^{\varepsilon}(x)\|_{L^s(0,T;L^s(\O))}h^{\frac{s-2}{2s}}
\\&\leq C\||\u|^2\|_{L^4(0,T;L^4(\O))}\|\u^{\varepsilon}\|_{L^s(0,T;{B_s^{\alpha,\infty}( \O)})}h^{\frac{s-2}{2s}}\varepsilon^{\alpha-1},
\end{split}
\end{equation}
where we used $$\|\u^{\varepsilon}(x)-\u^{\varepsilon}(\delta(x))\|_{L^s(0,T;L^s(\O))}\leq C(s,\O) h\|\nabla u^{\varepsilon}\|_{L^s(0,T;L^s(\O))}.$$
We choose $h=\varepsilon^\nu$, where \begin{equation}
\label{range of index}\frac{2s(1-\alpha)}{s-2}<\nu<1
\end{equation}  for any $s>2$, so $\alpha>\frac{1}{2}+\frac{1}{s}$, then $$
|\int_0^TA_{13\varepsilon}\,dt|\to 0$$ as $(\varepsilon,h)\to 0.$
\\

For term $A_2$, we calculate it as follows
\begin{equation*}
\begin{split}A_2&=
-\int_{\O}\u\otimes\u\theta_h(x)\nabla((\theta_h\u)^{\varepsilon})^{\varepsilon}\,dx
\\&=
-\int_{\O}(\u\otimes\theta_h(x)\u)^{\varepsilon}\nabla((\theta_h\u)^{\varepsilon})\,dx
\\&=
-\int_{\O}\left((\u\otimes\theta_h(x)\u)^{\varepsilon}-\u\otimes(\theta_h\u)^{\varepsilon}\right)\nabla((\theta_h\u)^{\varepsilon})\,dx+
\int_{\O}\u\otimes(\theta_h\u)^{\varepsilon}\cdot\nabla((\theta_h\u)^{\varepsilon})\,dx
\\&=A_{21\varepsilon}+A_{22\varepsilon}.
\end{split}
\end{equation*}
Note that
\begin{equation*}
\begin{split}&
-\int_{\O}\left((\u\otimes\theta_h(x)\u)^{\varepsilon}-\u\otimes(\theta_h\u)^{\varepsilon}\right)\nabla((\theta_h\u)^{\varepsilon})\,dx
\\&=
\int_{\O}\left(\Dv(\u\otimes\theta_h(x)\u)^{\varepsilon}-\Dv(\u\otimes(\theta_h\u)^{\varepsilon})\right)(\theta_h\u)^{\varepsilon}\,dx,
\end{split}
\end{equation*}
which could be bounded by $$C\|\nabla\u\|_{L^2(0,T;L^2(\O))}\|\theta_h\u\|_{L^4(0,T;L^4(\O))}.$$
 By Lemma \ref{Lions's lemma}, thus we have $A_{21\varepsilon}$ converges to zero as $\varepsilon\to 0$. This convergence does not depend on the value of $h$.
%Similar to $A_{11\varepsilon}$ and $A_{12\varepsilon}$, we can show that $A_{21\varepsilon}\to 0$ as $\varepsilon\to 0.$
Meanwhile, we find that
$$\int_{\O}\u\otimes(\theta_h\u)^{\varepsilon}\cdot\nabla((\theta_h\u)^{\varepsilon})\,dx=\int_{\O}\u\cdot\nabla\frac{1}{2}|(\theta_h\u)^{\varepsilon}|^2\,dx=0,$$
because $\Dv\u=0$ in the distribution sense.
%%\begin{equation*}\begin{split}&\int_{\O}\u\otimes(\theta_h\u)^{\varepsilon}\cdot\nabla((\theta_h\u)^{\varepsilon})\,dx\\&=
%\int_{\O}(\u-\u^{\varepsilon})\otimes(\theta_h\u)^{\varepsilon}\cdot\nabla((\theta_h\u)^{\varepsilon})\,dx+
%\int_{\O}\u^{\varepsilon}\otimes(\theta_h\u)^{\varepsilon}\cdot\nabla((\theta_h\u)^{\varepsilon})\,dx
%\\&=
%\int_{\O}(\u-\u^{\varepsilon})\otimes(\theta_h\u)^{\varepsilon}\cdot\nabla((\theta_h\u)^{\varepsilon})\,dx+
%\int_{\O}\u^{\varepsilon}\cdot\nabla\frac{1}{2}|(\theta_h\u)^{\varepsilon}|^2\,dx
%\\&=
%\int_{\O}(\u-\u^{\varepsilon})\otimes(\theta_h\u)^{\varepsilon}\cdot\nabla((\theta_h\u)^{\varepsilon})\,dx,
%\end{split}
%\%end{equation*}
%which tends to zero as $\varepsilon\to0.$
\end{proof}
\vskip0.3cm
\textbf{Step 3}. The goal of this step is to prove the following proposition.
\begin{Proposition}
\label{prop. viscous term}
Let $\u$ be as in Theorem \ref{main result}, then
$$\mu\int_0^t\int_{\O}\Delta\u\left(\theta_h((\theta_h\u)^{\varepsilon})^{\varepsilon}\right)\,dx\,dt\to\mu\int_0^t\int_{\O}|\nabla\u|^2\,dx\,dt$$
as $(\varepsilon,h)\to 0$.
\end{Proposition}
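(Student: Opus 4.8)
The plan is to integrate the viscous term by parts — the cut-off $\theta_h$ kills the boundary contribution — so as to turn it into a Dirichlet-type integral, then to isolate the ``bulk'' piece that reproduces $\mu\int_0^t\int_\O|\nabla\u|^2\,d\x\,dt$ and to show that everything else lives in the boundary layer $\O_{2h}$ and vanishes in the limit. First, since $\kappa(s)=0$ for $s\le1$ we have $\supp\theta_h\subset\{d(\cdot,\partial\O)\ge h\}$, so $\Phi^{h,\varepsilon}=\theta_h((\theta_h\u)^\varepsilon)^\varepsilon$ is supported in $\ov\O\setminus\O_h$, compactly inside $\O$; together with $\nabla\u\in L^2(\O)$ this makes $\int_\O\Delta\u\cd\Phi^{h,\varepsilon}\,d\x=-\int_\O\nabla\u:\nabla\Phi^{h,\varepsilon}\,d\x$ rigorous with no boundary term. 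Using $\nabla\Phi^{h,\varepsilon}=\nabla\theta_h\otimes((\theta_h\u)^\varepsilon)^\varepsilon+\theta_h\nabla(((\theta_h\u)^\varepsilon)^\varepsilon)$, the right-hand side equals $-(J_1+J_2)$ with $J_1=\int_\O\nabla\u:(\nabla\theta_h\otimes((\theta_h\u)^\varepsilon)^\varepsilon)\,d\x$ (the term carrying the singular factor $\nabla\theta_h$) and $J_2=\int_\O\theta_h\nabla\u:\nabla(((\theta_h\u)^\varepsilon)^\varepsilon)\,d\x$.

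Next I would treat the bulk term $J_2$. For $\varepsilon<h/4$ every function involved is supported at distance $\gtrsim h$ from $\partial\O$, so mollification is interior and self-adjoint and commutes with $\nabla$; hence $J_2=\int_\O(\theta_h\nabla\u)^\varepsilon:(\nabla(\theta_h\u))^\varepsilon\,d\x$, and splitting $\nabla(\theta_h\u)=\theta_h\nabla\u+\nabla\theta_h\otimes\u$ yields $J_2=\|(\theta_h\nabla\u)^\varepsilon\|_{L^2(\O)}^2+R$ with $R=\int_\O(\theta_h\nabla\u)^\varepsilon:(\nabla\theta_h\otimes\u)^\varepsilon\,d\x$. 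For the square term, the usual mollification bound together with $\|\nabla\theta_h\|_{L^\infty}\le C/h$ and $h=\varepsilon^\nu$, $\nu<1$, gives $\|(\theta_h\nabla\u)^\varepsilon-\theta_h\nabla\u\|_{L^2(\O)}\to0$, and then dominated convergence ($\theta_h\uparrow1$ pointwise in $\O$, $\theta_h|\nabla\u|\le|\nabla\u|\in L^2$) gives $\|\theta_h\nabla\u\|_{L^2(\O)}^2\to\|\nabla\u\|_{L^2(\O)}^2$; after integrating in $t$ (dominated by $\|\nabla\u(t)\|_{L^2(\O)}^2\in L^1(0,T)$) this produces exactly the $\mu\int_0^t\int_\O|\nabla\u|^2\,d\x\,dt$ of the statement. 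It then remains to show $\int_0^t J_1\,dt\to0$ and $\int_0^t R\,dt\to0$.

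The remaining step — which I expect to be the real obstacle — is controlling the two boundary-layer terms $J_1$ and $R$: both are supported in the shell $\O_{2h}$ and carry the factor $\nabla\theta_h$, which blows up like $1/h$ (the ``$L^2$-strength of the Kato layer'' of the introduction). By Cauchy--Schwarz and Young's inequality for the mollifier (with $\varepsilon\ll h$ to absorb the support enlargement), $|J_1|+|R|\le \f{C}{h}\|\nabla\u\|_{L^2(\O)}\|\u\|_{L^2(\O_{3h})}$. Here I would use, exactly as in the treatment of $A_{13\varepsilon}$ in Step~2, that the no-slip condition $\u=0$ on $\partial\O$ together with $\u\in B_s^{\alpha,\infty}(\O)$ gives $\|\u\|_{L^s(\O_h)}\le Ch^{\alpha}\|\u\|_{B_s^{\alpha,\infty}(\O)}$ for $h$ small (write $\u(\x)=\u(\x)-\u(\delta(\x))$ on $\O_h$ and use $|\x-\delta(\x)|=d(\x)<h$), and then, since $s>2$ and $|\O_h|\le Ch$, Hölder upgrades this to $\|\u\|_{L^2(\O_h)}\le Ch^{\,\alpha+\f12-\f1s}\|\u\|_{B_s^{\alpha,\infty}(\O)}$. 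Therefore $|J_1|+|R|\le Ch^{\,\alpha-\f12-\f1s}\|\nabla\u\|_{L^2(\O)}\|\u\|_{B_s^{\alpha,\infty}(\O)}$, and Hölder in time with exponents $2,\,s,\,\f{2s}{s-2}$ gives
\[
\int_0^t\!\big(|J_1|+|R|\big)\,dt\le C\,T^{\f12-\f1s}\,h^{\,\alpha-\f12-\f1s}\,\|\nabla\u\|_{L^2(0,T;L^2(\O))}\,\|\u\|_{L^s(0,T;B_s^{\alpha,\infty}(\O))}\longrightarrow0,
\]
the decay being available \emph{precisely} because $\alpha>\f12+\f1s$. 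Combining the three contributions then proves the proposition. (Alternatively one could bypass the Besov space here and invoke a localized Hardy inequality $\|\u\|_{L^2(\O_h)}\le Ch\|\nabla\u\|_{L^2(\O_{Ch})}$ for $\u(t,\cdot)\in H^1_0(\O)$, which bounds $|J_1|+|R|$ by $C\|\nabla\u\|_{L^2(\O)}\|\nabla\u\|_{L^2(\O_{Ch})}\to0$; but the Besov estimate is the one consistent with the hypotheses used elsewhere in the proof.)
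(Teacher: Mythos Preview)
Your argument is essentially correct and somewhat more economical than the paper's. After the same integration by parts, the paper decomposes $B_1=-\mu\int_\O\nabla\u\cdot\nabla\theta_h\,((\theta_h\u)^\varepsilon)^\varepsilon\,dx$ into three commutator pieces $B_{11}+B_{12}+B_{13}$ (peeling $\nabla\theta_h$ and then $\u$ off the mollifier via Lemma~\ref{key lemma}), and $B_2=-\mu\int_\O\theta_h\nabla\u\cdot\nabla((\theta_h\u)^\varepsilon)^\varepsilon\,dx$ into $B_{21,\varepsilon}+B_{22,\varepsilon}+\mu\int_\O|\theta_h\nabla\u^\varepsilon|^2+\text{(cross term)}$, the last two handled by the \emph{mollified} device $\u^\varepsilon(x)-\u^\varepsilon(\delta(x))$ together with the Bernstein-type bound $\|\nabla\u^\varepsilon\|_{L^s}\le C\varepsilon^{\alpha-1}\|\u\|_{B_s^{\alpha,\infty}}$; this produces a factor $h^{(s-2)/2s}\varepsilon^{\alpha-1}$ and forces the coupling $h=\varepsilon^\nu$ with $\nu>\f{2s(1-\alpha)}{s-2}$. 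You instead reach $\|(\theta_h\nabla\u)^\varepsilon\|_{L^2}^2$ in one move and bound the two boundary-layer remainders $J_1,R$ directly by $\f{C}{h}\|\nabla\u\|_{L^2}\|\u\|_{L^2(\O_{3h})}$, obtaining decay purely in $h$ with no $\varepsilon$--$h$ balancing beyond $\varepsilon\ll h$. Your route is shorter and makes the r\^ole of the boundary layer more transparent; the paper's route has the merit of reusing verbatim the machinery (Lemmas~\ref{Lions's lemma}--\ref{key lemma} and the $\u^\varepsilon(x)-\u^\varepsilon(\delta(x))$ trick) already deployed for the convection term.

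One caveat: the step ``$\|\u\|_{L^s(\O_h)}\le Ch^\alpha\|\u\|_{B_s^{\alpha,\infty}}$ by writing $\u(x)=\u(x)-\u(\delta(x))$'' is not literally ``exactly as in $A_{13\varepsilon}$'': there the function is $\u^\varepsilon$, which is smooth, so the mean value theorem applies pointwise and one lands on $\|\nabla\u^\varepsilon\|_{L^s}$; for the un-mollified $\u$ the map $x\mapsto x-\delta(x)$ is not a fixed translation, so the Besov seminorm does not control it directly. This can be repaired (flatten the boundary locally and translate normally, or work with the zero extension), but the localized Hardy inequality you mention at the end is in fact the cleanest fix: it uses only $\u\in H^1_0(\O)$, gives $\|\u\|_{L^2(\O_{3h})}\le Ch\|\nabla\u\|_{L^2(\O_{3h})}$, and then
\[
\int_0^t\big(|J_1|+|R|\big)\,dt\le C\|\nabla\u\|_{L^2(0,T;L^2(\O))}\,\|\nabla\u\|_{L^2(0,T;L^2(\O_{3h}))}\longrightarrow 0
\]
by dominated convergence. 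So for this proposition the Besov hypothesis is actually not needed; your parenthetical alternative is the right argument to promote.
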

\begin{proof} For the viscous term, we have
\begin{equation}
\label{disserpation term}
\begin{split}&\mu\int_{\O}\Phi^{h,\varepsilon}\Delta\u\,dx=\mu\int_{\O}\Delta\u\left(\theta_h((\theta_h\u)^{\varepsilon})^{\varepsilon}\right)\,dx
\\&=-\mu\int_{\O}\nabla\u\cdot\nabla\theta_h((\theta_h\u)^{\varepsilon})^{\varepsilon}\,dx-\mu\int_{\O}(\theta_h\nabla\u)\cdot\nabla((\theta_h\u)^{\varepsilon})^{\varepsilon}\,dx
\\&=B_1+B_2.
\end{split}
\end{equation}
For term $B_1$,
we have
\begin{equation*}\begin{split}&
-\mu\int_{\O}\nabla\u\cdot\nabla\theta_h((\theta_h\u)^{\varepsilon})^{\varepsilon}\,dx=
\mu\int_{\O}(\nabla\u\cdot\nabla\theta_h)^{\varepsilon}((\theta_h\u)^{\varepsilon})\,dx
\\&=\mu\int_{\O}\left((\nabla\u\cdot\nabla\theta_h)^{\varepsilon}-\nabla\u^{\varepsilon}\cdot\nabla\theta_h\right)((\theta_h\u)^{\varepsilon})\,dx+
\mu\int_{\O}\nabla\u^{\varepsilon}\cdot\nabla\theta_h((\theta_h\u)^{\varepsilon}-\theta_h\u^{\varepsilon})\,dx
\\&+\mu\int_{\O}\nabla\u^{\varepsilon}(\theta_h\u^{\varepsilon})\cdot\nabla\theta_h\,dx
\\&=B_{11}+B_{12}+B_{13}.
\end{split}
\end{equation*}
We are able to control $B_{11}$ as follows
\begin{equation*}
\begin{split}&\int_0^TB_{11}\,dt
\\&\leq
\mu\left|\int_0^T\int_{\O}\left((\nabla\u\cdot\nabla\theta_h)^{\varepsilon}-\nabla\u^{\varepsilon}\cdot\nabla\theta_h\right)((\theta_h\u)^{\varepsilon})\,dx\,dt\right|
\\&\leq C \int_0^T\int_{\O_h}\left|(\nabla\u\cdot\nabla\theta_h)^{\varepsilon}-\nabla\u^{\varepsilon}\nabla\theta_h\right|\left|((\theta_h\u)^{\varepsilon})\right|\,dx\,dt
\\&\leq C \|\nabla\u\|_{L^2(0,T;L^2(\O))}\|\u\|_{L^4(0,T;L^4(\O))}\frac{\varepsilon}{h^{\frac{3}{4}}},
\end{split}
\end{equation*}
and control $B_{12}$ as follows
\begin{equation*}
\begin{split}&\left|
\mu\int_0^T\int_{\O}\nabla\u^{\varepsilon}\cdot\nabla\theta_h((\theta_h\u)^{\varepsilon}-\theta_h\u^{\varepsilon})\,dx\,dt\right|
\\&\leq C\frac{\varepsilon}{h^{\frac{3}{4}}}\|\nabla\u\|_{L^2(0,T;L^2(\O))}\|\u\|_{L^4(0,T;L^4(\O))},
\end{split}
\end{equation*}
choosing $h=\varepsilon^{\nu}$, where $\nu$ is given in \eqref{range of index}, then $B_{11}$ and $B_{12}$ tend to zero.
We give a control on $B_{13}$,
\begin{equation*}
\mu\int_0^T\int_{\O}\nabla\u^{\varepsilon}(\theta_h\u^{\varepsilon})\cdot\nabla\theta_h\,dx\,dt
=\mu\int_0^T\int_{\O_h}\nabla\u^{\varepsilon}\theta_h(\u^{\varepsilon}(x)-\u^\varepsilon(\delta(x)))\cdot\nabla\theta_h\,dx\,dt,
%\nabla\theta_h
\end{equation*}
which could be bounded by
\begin{equation}
\begin{split}&\frac{C\mu}{h}\||\nabla\u|\|_{L^2(0,T;L^2(\O))}\|\u^{\varepsilon}(x)-\u^{\varepsilon}(\delta(x))\|_{L^s(0,T;L^s(\O))}h^{\frac{s-2}{2s}}
\\&\leq C\mu\|\nabla\u\|_{L^2(0,T;L^2(\O))}\|\nabla\u^{\varepsilon}(x)\|_{L^s(0,T;L^s(\O))}h^{\frac{s-2}{2s}}
\\&\leq C\mu\|\nabla\u\|_{L^2(0,T;L^2(\O))}\|\u^{\varepsilon}\|_{L^s(0,T;{B_s^{\alpha,\infty}( \O)})}h^{\frac{s-2}{2s}}\varepsilon^{\alpha-1}.
\end{split}
\end{equation}
Choosing $h=\varepsilon^{\nu}$, where $\nu$ is given in \eqref{range of index}, one obtains $B_1\to 0$
as $(\varepsilon,h)\to 0.$
For term $B_2$, we have
\begin{equation*}
\begin{split}
&\mu\int_{\O}(\theta_h\nabla\u)\cdot\nabla((\theta_h\u)^{\varepsilon})^{\varepsilon}\,dx=
\mu\int_{\O}(\theta_h\nabla\u)^{\varepsilon}\cdot\nabla(\theta_h\u)^{\varepsilon}\,dx
\\&=\mu\int_{\O}(\theta_h\nabla\u)^{\varepsilon}\cdot\nabla(\theta_h\u)^{\varepsilon}\,dx
-\mu\int_{\O}(\theta_h\nabla\u^{\varepsilon})\cdot\nabla((\theta_h\u)^{\varepsilon})\,dx
\\&+\mu\int_{\O}(\theta_h\nabla\u^{\varepsilon})\cdot\nabla((\theta_h\u)^{\varepsilon})\,dx-\mu\int_{\O}(\theta_h\nabla\u^{\varepsilon})\cdot\nabla(\theta_h\u^{\varepsilon})
\,dx
+
\mu\int_{\O}(\theta_h\nabla\u^{\varepsilon})\cdot\nabla(\theta_h\u^{\varepsilon})\,dx
\\&=B_{21,\varepsilon}+B_{22,\varepsilon}+
\mu\int_{\O}(\theta_h\nabla\u^{\varepsilon})\cdot\nabla(\theta_h\u^{\varepsilon})\,dx.
\end{split}
\end{equation*}
We find
\begin{equation*}
\begin{split}B_{21,\varepsilon}&=- \mu\int_{\O}\left(\Dv(\theta_h\nabla\u)^{\varepsilon}-\Dv(\theta_h\nabla\u^{\varepsilon})\right)(\theta_h\u)^{\varepsilon}\,dx
\\&\leq C\mu\|\left(\Dv(\theta_h\nabla\u)^{\varepsilon}-\Dv(\theta_h\nabla\u^{\varepsilon})\right)\|_{L^{\frac{4}{3}}(0,T;L^{\frac{4}{3}}(\O))}\|\u\|_{L^4(0,T;L^4(\O))},
\end{split}
\end{equation*}
and Lemma \ref{Lions's lemma} yields $$\|\left(\Dv(\theta_h\nabla\u)^{\varepsilon}-\Dv(\theta_h\nabla\u^{\varepsilon})\right)\|_{L^{\frac{4}{3}}(0,T;L^{\frac{4}{3}}(\O))}\leq C\frac{\varepsilon}{h}\|\nabla\u\|_{L^2(0,T;L^2(\O))}$$
choosing $h=\varepsilon^{\nu}$, where $\nu$ is given in \eqref{range of index}, one obtains that$B_{21\varepsilon}\to 0$
as $(\varepsilon,h)\to 0.$
Similarly, we have  $$B_{22,\varepsilon}\to 0$$
 as $(\varepsilon,h)\to 0$.

 The last term of $B_2$ is given by
 \begin{equation}
 \label{last term of B2}
 \begin{split}&
\mu\int_{\O}(\theta_h\nabla\u^{\varepsilon})\cdot\nabla(\theta_h\u^{\varepsilon})\,dx=
\mu\int_{\O}|\theta_h\nabla\u^{\varepsilon}|^2\,dx+
\mu\int_{\O}(\theta_h\nabla\u^{\varepsilon}):\u^{\varepsilon}\nabla\theta_h\,dx.
\end{split}
\end{equation}
We calculate the second term on the right side in \eqref{last term of B2}
\begin{equation*}\begin{split}
&|
\mu\int_0^T\int_{\O}(\theta_h\nabla\u^{\varepsilon}):\u^{\varepsilon}\nabla\theta_h\,dx\,dt|
\\&\leq \frac{C}{h}
\mu|\int_0^T\int_{\O}(\theta_h\nabla\u^{\varepsilon})(\u^{\varepsilon}(x)-\u^{\varepsilon}(\delta(x)))\,dx\,dt|
\\&\leq \frac{C\mu}{h}\||\nabla\u|\|_{L^2(0,T;L^2(\O))}\|\u^{\varepsilon}(x)-\u^{\varepsilon}(\delta(x))\|_{L^s(0,T;L^s(\O))}h^{\frac{s-2}{2s}}
\\&\leq C\mu\|\nabla\u\|_{L^2(0,T;L^2(\O))}\|\nabla\u^{\varepsilon}(x)\|_{L^s(0,T;L^s(\O))}h^{\frac{s-2}{2s}}
\\&\leq C\mu\|\nabla\u\|_{L^2(0,T;L^2(\O))}\|\u^{\varepsilon}\|_{L^s(0,T;{B_s^{\alpha,\infty}( \O)})}h^{\frac{s-2}{2s}}\varepsilon^{\alpha-1}\to 0
\end{split}
\end{equation*}
as $(\varepsilon,h)\to 0$. We find the first term on the right side in  \eqref{last term of B2} as follows
\begin{equation*}
\begin{split}
\mu\int_{\O}|\theta_h\nabla\u^{\varepsilon}|^2\,dx&=\mu\int_{d(x,\partial\O)\geq 2h}|\nabla\u^{\varepsilon}|^2\,dx+\mu\int_{h\leq d(x,\partial\O)\leq 2h}|\theta_h\nabla\u^{\varepsilon}|^2\,dx
\\&=J_{1\varepsilon}+J_{2\varepsilon}.
\end{split}
\end{equation*}
Note that $\nabla\u$ is bounded in $L^2(0,T;L^2(\O))$,   we have
$$J_{1\varepsilon}\to \mu\int_{\O}|\nabla\u|^2\,dx$$
as $(\varepsilon,h)\to 0$.
Term $J_{2\varepsilon}$ could be controlled as follow
\begin{equation*}
\begin{split}&
|J_{2\varepsilon}|\leq C\mu\int_{h\leq d(x,\partial\O)\leq 2h}|\nabla\u^{\varepsilon}|^2\,dx
\\&\leq C\mu\varepsilon^{\alpha-1}\|\u\|_{B_s^{\alpha,\infty}( \O)}h^{\frac{s-2}{s}}.
\end{split}
\end{equation*}
Choosing $h=\varepsilon^{\frac{s(1-\alpha)}{s-2}}$  for any $s>2$ and $0<\alpha<1,$ thus $$J_{2\varepsilon}\to 0$$ as $(\varepsilon,h)\to 0.$
\end{proof}
\vskip0.3cm
\textbf{Step 4}. We are aiming at proving the following proposition with respect to Pressure term.
\begin{Proposition}
\label{prop pressure}
Let $\u$ be as in Theorem \ref{main result}, then
\begin{equation*}
\int_0^t\int_{\O}\nabla P(\theta_h(\theta_h\u)^{\varepsilon})^{\varepsilon}\,dx\,dt\to 0
\end{equation*}
as $(\varepsilon,h)\to 0$.
\end{Proposition}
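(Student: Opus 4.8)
The plan is to use that the test field $\Phi^{h,\varepsilon}=\theta_h((\theta_h\u)^{\varepsilon})^{\varepsilon}$ fails to be solenoidal \emph{only through the cut-off} $\theta_h$, so that the pressure contribution collapses to a boundary-layer integral of the same type already dealt with in Steps~2--3. First I would integrate by parts, using $\Dv\u=0$ in the sense of distributions, to write
\[
\int_{\O}\nabla P\cd\Phi^{h,\varepsilon}\,dx=-\int_{\O}P\,\Dv\Phi^{h,\varepsilon}\,dx;
\]
no boundary term appears because $\theta_h$, hence $\Phi^{h,\varepsilon}$, vanishes in a neighbourhood of $\partial\O$. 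Since mollification commutes with the divergence and $\Dv(\theta_h\u)=\nabla\theta_h\cd\u$, one computes
\[
\Dv\Phi^{h,\varepsilon}=\nabla\theta_h\cd\big((\theta_h\u)^{\varepsilon}\big)^{\varepsilon}+\theta_h\big((\nabla\theta_h\cd\u)^{\varepsilon}\big)^{\varepsilon}=:\mathrm{D}_1+\mathrm{D}_2 .
\]
Each summand carries a factor $\nabla\theta_h$, so $\Dv\Phi^{h,\varepsilon}$ is supported in a $2\varepsilon$--neighbourhood of the shell $\{h\le d(x,\partial\O)\le 2h\}$, on which $|\nabla\theta_h|\le C/h$ and whose Lebesgue measure is $\le Ch$.

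Next I need an a~priori bound on the pressure. Interpolating $\u\in L^{\infty}(0,T;L^2(\O))\cap L^p(0,T;L^q(\O))$ with $\tfrac1p+\tfrac1q\le\tfrac12$, $q\ge4$ gives $\u\in L^4(0,T;L^4(\O))$, hence $\u\otimes\u\in L^2(0,T;L^2(\O))$; taking the divergence of the momentum equation in \eqref{NS equations} and using the standard elliptic (Calder\'on--Zygmund/Stokes) estimate for the pressure on the $C^2$ bounded domain $\O$, with the normalisation $\int_\O P\,dx=0$, yields $\|P\|_{L^2(0,T;L^2(\O))}\le C\|\u\|_{L^4(0,T;L^4(\O))}^2<\infty$. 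Armed with this, I would estimate $\int_0^t\int_\O P\,\mathrm{D}_i\,dx\,dt$ by transferring the two mollifications off the factor $(\cdot)^{\varepsilon\varepsilon}$ and onto the factor carrying $P$ --- legitimate since the mollifier is even and, for $\varepsilon\ll h$, all the relevant supports stay at distance $\gtrsim h$ from $\partial\O$ --- so that
\[
\int_\O P\,\mathrm{D}_1\,dx=\int_\O (P\nabla\theta_h)^{\varepsilon\varepsilon}\cd(\theta_h\u)\,dx,\qquad
\int_\O P\,\mathrm{D}_2\,dx=\int_\O \big((P\theta_h)^{\varepsilon\varepsilon}\big)\,\nabla\theta_h\cd\u\,dx .
\]
Now the surviving $\u$--factor is un-mollified and $\u$ has vanishing trace on $\partial\O$, so the boundary-layer estimate underlying \eqref{A13 control} --- which combines zero trace with $\u\in L^s(0,T;B_s^{\alpha,\infty}(\O))$, $\alpha>\tfrac1s$ --- gives $\|\u\|_{L^s(0,T;L^s(\{d\le 2h\}))}\le Ch^{\alpha}\|\u\|_{L^s(0,T;B_s^{\alpha,\infty}(\O))}$. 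Since $\|(P\nabla\theta_h)^{\varepsilon\varepsilon}\|_{L^2_x}$ and $\|(P\theta_h)^{\varepsilon\varepsilon}\nabla\theta_h\|_{L^2_x}$ are both $\le (C/h)\|P\|_{L^2(\{d\le 2h\})}$, H\"older's inequality in $x$ with exponents $2$, $s$ and $r$ (where $\tfrac1r=\tfrac12-\tfrac1s$, the last factor producing $h^{(s-2)/(2s)}$ from the shell measure) and then in $t$ bounds each term by
\[
\frac{C}{h}\,\|P\|_{L^2(0,T;L^2(\O))}\,h^{\alpha}\|\u\|_{L^s(0,T;B_s^{\alpha,\infty}(\O))}\,h^{\frac{s-2}{2s}}
=C\,h^{\,\alpha-\frac12-\frac1s}\,\|P\|_{L^2(0,T;L^2(\O))}\|\u\|_{L^s(0,T;B_s^{\alpha,\infty}(\O))},
\]
which tends to $0$ as $h\to0$ precisely because $\alpha>\tfrac12+\tfrac1s$. (Unlike the other steps, this argument does not even require the coupling $h=\varepsilon^{\nu}$: only $\varepsilon\ll h\to0$ is used, and no commutator lemma enters.)

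The step I expect to be the real obstacle is, as throughout the paper and as flagged in the introduction, the competition between the $1/h$ generated by differentiating the cut-off $\theta_h$ and the boundary-layer decay of $\u$ --- here it is won exactly by the extra hypothesis $\u\in L^s(0,T;B_s^{\alpha,\infty}(\O))$ with $\alpha>\tfrac12+\tfrac1s$, which makes $h^{\alpha}h^{(s-2)/(2s)}$ beat $h^{-1}$. The two secondary technical points I would check carefully are: (i) the justification of moving the mollifications inside $\O$ near the shell, harmless because $\theta_h$, and everything multiplied by it, is supported at distance $\ge h\gg\varepsilon$ from $\partial\O$; and (ii) the fractional collar inequality $\|\u\|_{L^s(\{d\le h\})}\lesssim h^{\alpha}\|\u\|_{B_s^{\alpha,\infty}}$ for functions of vanishing trace, which follows by zero-extension of $\u$ across $\partial\O$ (admissible since $\alpha>\tfrac1s$) together with the definition of $B_s^{\alpha,\infty}$. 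Everything else --- the integration by parts, the identity for $\Dv\Phi^{h,\varepsilon}$, the pressure bound and H\"older's inequality --- is routine.
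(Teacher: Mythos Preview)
Your argument is correct, and it takes a genuinely different route from the paper's own proof. After the same integration by parts, the paper transfers only \emph{one} mollification to the $P$-side, obtaining
\[
-\int_{\O}(P\nabla\theta_h)^{\varepsilon}(\theta_h\u)^{\varepsilon}\,dx-\int_{\O}(P\theta_h)^{\varepsilon}\Dv(\theta_h\u)^{\varepsilon}\,dx=:D_{1\varepsilon}+D_{2\varepsilon},
\]
then handles $D_{2\varepsilon}$ by splitting off a commutator $\Dv(\theta_h\u)^{\varepsilon}-\Dv(\theta_h\u^{\varepsilon})$ and invoking Lemma~\ref{Lions's lemma}, which forces the coupling $h=\varepsilon^{\nu}$. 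The remaining boundary-layer pieces are estimated using only the $L^{p/2}(0,T;L^{q/2})$ bound on $P$ coming from $\u\in L^p(0,T;L^q)$, $q\ge4$, without appealing to the Besov hypothesis at all. By contrast, you transfer \emph{both} mollifications, land on the bare $\u$, and then spend the Besov assumption through the collar inequality $\|\u\|_{L^s(\{d\le 2h\})}\lesssim h^{\alpha}\|\u\|_{B_s^{\alpha,\infty}}$; this lets you bypass Lemma~\ref{Lions's lemma} entirely and dispense with the $h=\varepsilon^{\nu}$ coupling, at the (harmless) cost of using the extra hypothesis $\u\in L^s(0,T;B_s^{\alpha,\infty})$ in one more place. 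Two small points worth tightening: in your bound on $\int P\,\mathrm{D}_2$, the factor $(P\theta_h)^{\varepsilon\varepsilon}$ is not supported in the shell, so replace $\|P\|_{L^2(\{d\le 2h\})}$ by $\|P\|_{L^2(\O)}$ (the final estimate is unaffected); and the collar inequality you invoke is not literally the estimate in \eqref{A13 control} (which goes through $\nabla\u^{\varepsilon}$) but the more direct fractional Hardy-type bound you describe in your point~(ii), so cite it as such.
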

\begin{proof}
 $P$ is a solution to the following equation
\begin{equation*}
-\Delta P=\sum_{i,j=1}^{N}\partial_{x_i}\partial_{x_j}(\u_i\u_j),\quad P=0\text{ on }\partial\O.
\end{equation*}
This gives us \begin{equation}
\label{estimate P}
\|P\|_{L^{\frac{p}{2}}(0,T;L^{\frac{q}{2}}(\O))}\leq C\||\u|^2\|_{L^{\frac{p}{2}}(0,T;L^{\frac{q}{2}}(\O))}.
\end{equation}
Here we consider the following ones
\begin{equation*}
\begin{split}&
\int_{\O}\nabla P(\theta_h(\theta_h\u)^{\varepsilon})^{\varepsilon}\,dx
\\&=-\int_{\O}(P\nabla\theta_h(x))^{\varepsilon}(\theta_h\u)^{\varepsilon}\,dx
-\int_{\O}(P\theta_h(x))^{\varepsilon}\Dv(\theta_h\u)^{\varepsilon}\,dx
\\&=D_{1\varepsilon}+D_{2\varepsilon}.
\end{split}
\end{equation*}
%We are able to find that \begin{equation*}
%D_{1\varepsilon}\to -\int_{\O}P\nabla\theta_h(x)(\theta_h\u)^{\varepsilon}\,dx.
%\end{equation*}
Note the definition of function $\theta_h(x)$, we are able to show that $D_{1\varepsilon}\to 0$. In fact,
\begin{equation*}
\begin{split}&
 |\int_{\O}(P\nabla\theta_h(x))^{\varepsilon}(\theta_h\u)^{\varepsilon}\,dx|=|\int_{h\leq d\leq 2h}(P\nabla\theta_h(x))^{\varepsilon}(\theta_h\u)^{\varepsilon}\,dx|
\\&\leq C\||\u|^3\|_{L^{\frac{q}{3}}(\O)} h^{\frac{q-3}{q}},
\end{split}
\end{equation*}
this tends to zero because $q\geq 4.$
For term $D_{2\varepsilon}$, we have
\begin{equation*}
\begin{split}
D_{2\varepsilon}&=-\int_{\O}(P\theta_h(x))^{\varepsilon}\Dv(\theta_h\u)^{\varepsilon}\,dx+\int_{\O}(P\theta_h(x))^{\varepsilon}\Dv(\theta_h\u^{\varepsilon})\,dx
\\&-\int_{\O}(P\theta_h(x))^{\varepsilon}\Dv(\theta_h\u^{\varepsilon})\,dx
\\&=D_{21\varepsilon}+D_{22\varepsilon}.
\end{split}
\end{equation*}
By Lemma \ref{Lions's lemma}, for
choosing $h=\varepsilon^{\nu}$, where $\nu$ is given in \eqref{range of index}, we have $D_{21\varepsilon}\to 0$ as $\varepsilon$ goes to zero. Thus, we focus on term $D_{22\varepsilon}$ in the following ones
\begin{equation*}
\begin{split}
D_{22\varepsilon}&=
-\int_{\O}(P\theta_h(x))^{\varepsilon}\Dv(\theta_h\u^{\varepsilon})\,dx
\\&=-\int_{\O}(P\theta_h(x))^{\varepsilon}\nabla\theta_h\u^{\varepsilon}\,dx-\int_{\O}(P\theta_h(x))^{\varepsilon}\theta_h\Dv\u^{\varepsilon}\,dx
\\&=-\int_{\O}(P\theta_h(x))^{\varepsilon}\nabla\theta_h\u^{\varepsilon}\,dx,
\end{split}
\end{equation*}
using the same argument for $D_{1\varepsilon},$ we are able to show that $D_{22\varepsilon}$ tends to zero as $(\varepsilon,h)\to 0.$
\end{proof}
\vskip0.3cm
\textbf{Step 5}.
Taking integration on both sides of \eqref{first step equality} with respect to $t$, and by \eqref{ut term}, Proposition \ref{prop. on convection}, Proposition \ref{prop. viscous term} and Proposition \ref{prop pressure},  one obtains the following equality
\begin{equation*}
\frac{1}{2}\int_{\O}|\u|^2\,dx+\mu\int_0^t\int_{\O}|\nabla\u|^2\,dx\,dt=\frac{1}{2}\int_{\O}|\u_0|^2\,dx.
\end{equation*}
\section*{Acknowledgements}

C. Yu's research was supported in part by Professor
Caffarelli's NSF Grant  DMS-1540162.

%\section*{Acknowledgments}

\bigskip\bigskip


\begin{thebibliography}{99}
\bibitem{BT}, C. Bardos and E. Titi, \emph{Onsager's Conjecture for the Incompressible Euler Equations in Bounded Domains.} Preprint 2017, 	arXiv:1707.03115.
\bibitem{CET} P. Constantin, W. E and E. Titi, \emph{Onsager's conjecture on the energy conservation for solutions of Euler's equation.}
Comm. Math. Phys. 165 (1994), no. 1, 207-209.

\bibitem{CY}M. Chen and C. Yu,\emph{
Onsager's energy conservation for inhomogeneous Euler equations.} Preprint 2017, 	arXiv:1706.08506.

\bibitem{DS13}
C. De Lellis, and L. Sz\'ekelyhidi, Jr., \emph{Continuous dissipative Euler flows and a conjecture
of Onsager.} European Congress of Mathematics, Eur. Math. Soc., Z\"urich (2013), 13-29.


\bibitem{DS13b}
C. De Lellis, and L. Sz\'ekelyhidi, Jr., \emph{Dissipative continuous Euler flows.} Invent. Math. {\bf 193} (2013), 377-407.

\bibitem{DS14}
C. De Lellis, and L. Sz\'ekelyhidi, Jr., \emph{Dissipative Euler flows and OnsagerÕs conjecture.} J. Eur. Math. Soc. (JEMS) {\bf 16} (2014), 1467-1505.

\bibitem{Ey} G. L. Eyink. \emph{Energy dissipation without viscosity in ideal hydrodynamics: I. Fourier analysis and local energy
transfer.} Phys. D {\bf 78} (1994), 222-240.


\bibitem{H}E. Hopf,\emph{\"{U}ber die Anfangswertaufgabe f\"{u}r die hydrodynamischen Grundgleichungen, }Math. Nachr. 4 (1951), 213-231.

\bibitem{I}P. Isett, \emph{A proof of Onsager's conjecture.} Preprint, 2016. arXiv:1608.08301.

\bibitem{I2}
P. Isett, \emph{On the endpoint regularity in OnsagerÕs conjecture}. Preprint, 2017.  arXiv:1706.01549.

\bibitem{K}T. Kato, \emph{Remarks on zero viscosity limit for nonstationary Navier-Stokes flows with boundary.} Seminar on nonlinear partial differential equations (Berkeley, Calif., 1983), 85-98, Math. Sci. Res. Inst. Publ., 2, Springer, New York, 1984.

\bibitem{O} L. Onsager, \emph{Statistical Hydrodynamics.} Nuovo Cimento (Supplemento), 6, 279 (1949)
\bibitem{Le} J. Leray, \emph{ Sur le mouvement d'un liquide visqueux emplissant l'espace.} (French) Acta Math. 63 (1934), no. 1, 193-248.
\bibitem{L} P.-L. Lions, \emph{
Mathematical topics in fluid mechanics. Vol. 1.
Incompressible models.} Oxford Lecture Series in Mathematics and its Applications, 3. Oxford Science Publications. The Clarendon Press, Oxford University Press, New York, 1996.


\bibitem{O} L. Onsager, \emph{Statistical Hydrodynamics.} Nuovo Cimento (Supplemento), {\bf 6}  (1949), 279-287.



\bibitem{R03}
R. Robert, \emph{Statistical hydrodynamics (Onsager revisited).} Handbook of Mathematical Fluid Dynamics vol 2 ed Friedlander and Serre, Elsevier, Amsterdam, (2003), 1-55.

\bibitem{Scheffer}
V. Scheffer, \emph{An inviscid flow with compact support in space-time.} J. Geom. Anal. {\bf 3} (1993), 343-401.



\bibitem{Serrin} J.
Serrin,
\emph{The initial value problem for the Navier-Stokes equations.} 1963 Nonlinear Problems (Proc. Sympos., Madison, WI., 1962) pp. 69-98 Univ. of Wisconsin Press, Madison, WI.

\bibitem{Shinbrot} M.
Shinbrot,\emph{
The energy equation for the Navier-Stokes system.}
SIAM J. Math. Anal. 5 (1974), 948-954.


\bibitem{Shnirelman}
A. Shnirelman, \emph{On the nonuniqueness of weak solution of the Euler equation.} Commun. Pure Appl. Math. {\bf 50} (1997), 1261-1286.

%\bibitem{VY} A. Vasseur, C. Yu, \emph{Existence of global weak solutions for 3D degenerate compressible Navier-Stokes equations.} accepted by  Invent. Math. arXiv:1501.06803.
\bibitem{Yu}C. Yu, \emph{Energy conservation for the weak solutions of the compressible Navier-Stokes equations}.  Arch. Ration. Mech. Anal. 225 (2017), no. 3, 1073-1087.


\end{thebibliography}
\end{document}